\documentclass[12pt,reqno,a4paper]{amsart}

\usepackage[plain]{fullpage}

\usepackage{calrsfs}
\usepackage[OT2,T1]{fontenc}
\usepackage[english]{babel}
\usepackage{newtxtext}
\usepackage{newtxmath}
\usepackage{bbm}
\usepackage{tikz}
\usepackage{booktabs}
\usepackage[table]{xcolor}
\usepackage{caption}
\usepackage{longtable}
\usepackage{enumitem}

\usepackage{mathtools}
\usepackage{mathrsfs}
\usepackage[unicode=true,pdfusetitle, bookmarks=true,bookmarksnumbered=false,bookmarksopen=false, breaklinks=true,pdfborder={0 0 0},pdfborderstyle={},backref=false,colorlinks=true]{hyperref}
\definecolor{myblue}{rgb}{0.09,0.52,0.74}
\definecolor{mygreen}{rgb}{0.05,0.6,0.2}
\hypersetup{pdfborder={0 0 0},pdfborderstyle={},colorlinks=true,linkcolor=myblue,citecolor=mygreen,urlcolor=blue}

\theoremstyle{plain}
\newtheorem{introtheorem}{Theorem}

\newtheorem{introconjecture}[introtheorem]{Conjecture}

\newtheorem{theorem}{Theorem}[section]

\newtheorem{conjecture}[theorem]{Conjecture}

\theoremstyle{definition}
\newtheorem{definition}[theorem]{Definition}
\newtheorem{introdefinition}[introtheorem]{Definition}

\newtheorem{example}[theorem]{Example}
\theoremstyle{remark}
\newtheorem{remark}[theorem]{Remark}
\newtheorem{introremark}[introtheorem]{Remark}

\counterwithin{equation}{section}

\newcommand{\Z}{\mathbf{Z}}

\newcommand{\R}{\mathbf{R}}
\newcommand{\Q}{\mathbf{Q}}

\renewcommand{\C}{\mathbf{C}}

\newcommand{\mr}{\mathrm}
\renewcommand{\d}{\mathrm{d}}
\newcommand{\ii}{\mathrm{i}}
\newcommand{\T}{\mathbf{T}}
\renewcommand{\geq}{\geqslant}
\renewcommand{\ge}{\geqslant}
\renewcommand{\leq}{\leqslant}
\renewcommand{\le}{\leqslant}
\renewcommand{\bar}{\overline}
\renewcommand{\epsilon}{\varepsilon}

\DeclareMathOperator{\Gal}{Gal}

\DeclareMathOperator{\m}{\mathrm{m}}

\DeclareMathOperator{\ord}{ord}

\definecolor{amethyst}{rgb}{0.6, 0.4, 0.8}
\definecolor{darkcyan}{rgb}{0.0, 0.55, 0.55}

\DeclarePairedDelimiter\abs{\lvert}{\rvert}%

\makeatletter
\let\oldabs\abs
\def\abs{\@ifstar{\oldabs}{\oldabs*}}

\makeatletter
\@namedef{subjclassname@1991}{\textup{2020} Mathematics Subject Classification}
\makeatother

\begin{document}

\date{\today}
\title{Relating Mahler measures and Dirichlet $L$-values: \\ new evidence for Chinburg's conjectures}

\author[D.~Hokken]{David Hokken}
\address{\normalfont DH: Mathematisch Instituut\\
        Universiteit Utrecht\\
        Postbus 80.010, 3508 TA Utrecht, Nederland}
\email{{\tt d.p.t.hokken@uu.nl}}

\author[M.~Mehrabdollahei]{Mahya Mehrabdollahei}
\address{\normalfont MM: Max Planck Institute for Mathematics, Vivatsgasse 7, 53111 Bonn, Germany}
\email{{\tt mehrabdollahei@mpim-bonn.mpg.de}}

\author{Berend Ringeling}
\address{\normalfont BR: D\'epartement de math\'ematiques et de statistique\\
Universit\'e de Montr\'eal\\
CP 6128 succ.~Centre-Ville\\
\indent Montr\'eal, QC H3C 3J7\\
Canada}
\email{{\tt bjringeling@gmail.com}}

\subjclass{11R06.}
\keywords{\normalfont Mahler measures, Chinburg's conjecture, $L$-values, Dirichlet characters}

\begin{abstract}
Let $\chi_{-f}$ be the odd quadratic Dirichlet character of conductor $f$, and let $\m(P)$ denote the Mahler measure of a polynomial $P$. In 1984, Chinburg conjectured that for any such $\chi_{-f}$ there exist an integral bivariate rational function $P$ (and, in the strong form, an integral polynomial) such that $\m(P)$ is a rational multiple of $L'(\chi_{-f},-1)$. The strong form of the conjecture was previously known to hold for $18$ values of $f$. We double the number of numerical examples, giving $8$ new instances of the strong and $18$ new instances of the weak conjecture. Our examples arise from an explicit approach, which also captures almost all of the previously known results, and is based on work of Boyd and Rodriguez-Villegas. Moreover, we prove Chinburg's weak conjecture if we allow cyclotomic coefficients.
\end{abstract}

\maketitle

\section{Introduction}

Let $\T^n$ be the complex unit torus in $n$ dimensions. The \emph{(logarithmic) Mahler measure} of a rational function $P \in \C(x_1, \ldots, x_n)^{\times}$ is the real number
\begin{equation*}
\m(P) \coloneqq \frac{1}{(2\pi \ii)^n}\int_{\T^n}\log|P(x_1,\dots, x_n)|\frac{\d x_1}{x_1}\cdots \frac{\d x_n}{x_n}.
\end{equation*}
Let $-f<0$ be a fundamental discriminant and denote by $\chi_{-f} \coloneqq \left(\frac{-f}{\cdot}\right)$ the odd, primitive, quadratic Dirichlet character of conductor $f$, with $L$-function $L(\chi_{-f}, s)$. In 1981, Smyth \cite{Smyth} proved the surprising equality
\begin{equation*}
\m(1+x+y) = L'(\chi_{-3}, -1)
\end{equation*}
and in the years since many more fascinating relations between multivariate Mahler measures and $L$-values of arithmetic objects have been established (see, e.g., \cite{Boyd-L,Deninger} and the survey \cite[Section 8.4]{BZ}). Here, in the spirit of Smyth's first example, we study rational relations between Mahler measures of bivariate polynomials and Dirichlet $L$-values. In this connection, Chinburg posed the following conjectures in a 1984 unpublished manuscript \cite{Chinburg}.

\begin{introconjecture}[Chinburg's conjectures]
\label{con:Chinburg} \ 
\begin{itemize}
    \item[\textup{(1)}] \textup{(weak form)} For any fundamental discriminant $-f<0$, there exists a bivariate rational function $P \in \Z(x,y)$ and a rational number $r \in \Q^{\times}$ such that $\m(P) = r L'(\chi_{-f}, -1).$
    \item[\textup{(2)}] \textup{(strong form)} In \textup{(1)}, we may take $P \in \Z[x,y]$.
\end{itemize}
\end{introconjecture}

The strong form of Conjecture~\ref{con:Chinburg} was previously known to hold for the $18$ values
\begin{equation}
\label{eq:oldexamples}
f=3, 4, 7, 8, 11, 15, 19, 20, 23, 24, 35, 39, 40, 55, 84, 120, 303, 755;
\end{equation}
see Table~\ref{tab:previous_strong}. We find new numerical, unproven examples of Conjecture~\ref{con:Chinburg} for the conductors
\begin{equation}
\label{eq:newexamples}
\begin{aligned}
    f = 43, 52, 56, &\, 68, 111, 132, 228, 696 \quad \text{(strong);} \\
    f = 31, 47, 51, 59, 67, 87, 88, 107, 123, &\, 136, 164, 183, 195, 260, 264, 291, 555, 804 \quad \text{(weak),}
\end{aligned}
\end{equation}
see Table~\ref{tab:new_strong} and Table~\ref{tab:T1}. All examples have been verified to $500$ digits precision. 

\begin{introremark}
\label{rem:Chinburg}
Let us clarify a few points concerning Conjecture~\ref{con:Chinburg}. Firstly, Chinburg stated only the weak form as a conjecture, and the strong form as a question, but subsequent literature \cite{Boyd-L,BRV1,BRV2,LQ,Ray} has referred to `Chinburg's conjecture' as the strong form of Conjecture~\ref{con:Chinburg} --- likely because Chinburg's manuscript contains a proof of the weak form. This also explains the absence of weak examples in the literature. However, at least since 2021 it is known that there is an irreparable error in that proof (see also \cite{Zudilin}), namely in Lemma $1$ of Chinburg's manuscript \cite{Chinburg}. At present, therefore, both the weak and strong form of Conjecture~\ref{con:Chinburg} remain open. Secondly, Chinburg actually posed his conjecture and question in the following more general way: given a fundamental discriminant $-f < 0$ and an odd integer $n \geq 0$, does there exist a polynomial (or rational function) $P$ in $n+1$ variables with integer coefficients such that $\m(P) = rL'(\chi_{-f}, -n)$ for some $r \in \Q^{\times}$? Here, we only study the case $n=1$.
\end{introremark}

\subsection*{Relations to the literature}

Our results (see \eqref{eq:newexamples}) follow from a synthesis of the existing quadratic bivariate examples $P=a(x)(y^2+1)+b(x)y \in \Z[x,y]$ giving rise to examples of Conjecture~\ref{con:Chinburg}, so let us first discuss the relevant literature.
There are several earlier works concerned with expressing Mahler measures of bivariate rational functions as $\Q$-linear combinations of one or more $L$-values of odd, quadratic Dirichlet characters. Table~\ref{tab:previous_strong} lists previously discovered polynomials giving examples of Chinburg's strong conjecture; only one example per conductor is listed. 

Ray \cite{Ray} defines, for each odd quadratic Dirichlet character $\chi$, one explicit bivariate polynomial and examines its Mahler measure by means of complex analysis and certain twisted $L$-series. As he shows, his method produces examples of Chinburg's conjecture for conductors $f \leq 24$ only.

The examples of Boyd and Rodriguez-Villegas \cite{BRV1} come from a systematic study of polynomials of the form $A(x,y) = p(x)y-q(x)$ with $p$ and $q$ cyclotomic; such polynomials were already considered by Boyd \cite[\S 1B]{Boyd-L}. Boyd and Rodriguez-Villegas also have a second paper on the topic \cite{BRV2}, where they generalize their previous work to a wide class of polynomials. This class contains certain bivariate polynomials of the form $P(x,y) = a(x)(y^2+1)+b(x)y$ defining curves of genus $0$ or $1$, which is also the type of polynomials that we will study in this work.
In this relation, we also mention the work of Vandervelde \cite{Vandervelde}, who similarly studied certain bivariate polynomials that define genus $0$ curves and provided an explicit formula for the Mahler measure of such polynomials. 
Later, Guilloux and Marché \cite{GM} reformulated the work of \cite{BRV2} in modern language, and gave an explicit formula for the Mahler measure of the polynomials studied in \cite{BRV2}.
Liu and Qin \cite{LQ} settled the strong Chinburg conjecture for three more conductors as part of a wide-ranging numerical study of the relation between special $L$-values and Mahler measures of certain families of polynomials defining genus $1$, $2$ or $3$ curves; their results related to Conjecture~\ref{con:Chinburg} follow from \cite{BRV2}.

Bertin and the second author \cite{BM} studied another family of bivariate polynomials, unbounded both in degree and genus, producing another example of the weak form of Conjecture~\ref{con:Chinburg} for conductor $15$; their example has explicit `rational correction factors'. Furthermore, they formulate, and provide evidence for, a further generalization to all odd, primitive Dirichlet characters of the general form of Chinburg's conjectures discussed in Remark~\ref{rem:Chinburg}. 

\subsection*{Approach and outline of the paper}

We start with the following definition.

\begin{introdefinition}
\label{def:Chinburg-polynomial}
Define $d_f \coloneqq L'(\chi_{-f},-1)$. We say that $P \in \Z[x,y]$ is a \emph{Chinburg polynomial} if there are fundamental discriminants $-f_1, \ldots, -f_s < 0$ and rational numbers $r_1, \ldots, r_s \in \Q$ such that
\begin{equation*}
    \m(P) = r_1 d_{f_1} + \cdots + r_s d_{f_s}.
\end{equation*}
\end{introdefinition}
If $P$ is a Chinburg polynomial and $s$ as above equals $1$, we immediately obtain an example of the strong Chinburg conjecture. When $s>1$ and the weak Chinburg conjecture is already known for at least $s-1$ of the $\{f_{1}, \ldots, f_{s} \}$, we may use linear algebra and the property $\m(PQ) = \m(P) + \m(Q)$ to obtain an example of the weak (and perhaps even the strong) Chinburg conjecture by combining various such polynomials $P$. Hence our goal is to construct Chinburg polynomials.

In \S\ref{sec:permissible-polynomials}, we formulate the notion of \emph{permissible polynomials}. Based on extensive numerical experiments, we conjecture that any permissible polynomial is a Chinburg polynomial; see Conjecture~\ref{con:ourcon} below. Then we show that almost all existing examples in the literature on Chinburg's conjectures come from such permissible polynomials; this is one of the main contributions of our work. In Remark~\ref{rem:conjecture}, we discuss why we believe that Conjecture~\ref{con:ourcon} holds. In \S \ref{sec:tables}, we provide an overview of old and new permissible polynomials; all tables with results are included in the appendix. We also discuss (and provide) the \texttt{Sage} algorithm used to obtain these examples. In \S \ref{sec:algebraicChinburg}, we prove Chinburg's weak conjecture in the setting of rational functions with coefficients in an abelian extension of $\Q$, answering a question of Brunault.

\subsection*{Notation}

For $a, b \in \R$, we write $a \doteq b$ if the first $500$ decimal digits of $a$ and $b$ coincide. For a fundamental discriminant $-f<0$, we denote $d_f \coloneqq L'(\chi_{-f},-1)$.

\subsection*{Acknowledgements}
The authors would like to thank François Brunault for discussions, feedback and for posing the question adressed in~\S\ref{sec:algebraicChinburg}. Furthermore, the authors would like to thank Marie-José Bertin, Gunther Cornelissen, Antonin Guilloux, Matilde Lalín, Riccardo Pengo, Thu-Hà Trieu, and Wadim Zudilin for discussions and feedback, and Steven Charlton for providing a polynomial that settles the strong Chinburg conjecture for conductor $68$.

D.~H. is supported by the Dutch Research Council (NWO) through the grant OCENW.M20.233.

D.~H. and B.~R. would like to thank their hosts for pleasant stays at the University of G\"ottingen in March 2024 and June 2025 and at Universit\'e de Montr\'eal in September 2024.

M.~M. gratefully acknowledges the support and hospitality of Utrecht University, Universit\'e de Montr\'eal and of the Max Planck Institute for Mathematics in Bonn, and thanks Pieter Moree and Don Zagier for facilitating these research visits and for many fruitful discussions.

\section{Permissible polynomials}
\label{sec:permissible-polynomials}

In this section, we consider bivariate polynomials of the form 
\begin{equation*}
P(x,y) = a(x)(y^2+1)+b(x)y \in \Z[x,y]
\end{equation*}
and define when such a polynomial is \emph{permissible}. We start by setting up some notation and definitions.

\begin{definition}
The \emph{discriminant} of $P$ (with respect to $y$) is the univariate polynomial
\begin{equation*}
\Delta(P)(x) = \Delta_y(P)(x) \coloneqq b(x)^2-4a(x)^2.
\end{equation*}
Furthermore, we define polynomials $g,h \in \Z[x]$ uniquely by the property that $\Delta(P)(x) = gh^2$ with $g$ squarefree.   
\end{definition}

\begin{definition}
A univariate polynomial $p(x) \in \R[x]$ is \emph{reciprocal} if $x^{\deg{p}} p(1/x) = p(x)$. Furthermore, for an integer $\ell$, we say that $p(x)$ is an \emph{$\ell$-shifted reciprocal polynomial} if $x^{\ell} p(1/x) = p(x)$. If $p(x)$ is such an $\ell$-shifted reciprocal polynomial, we define $\tilde{p}(x) \coloneqq x^{-\ell/2} p(x)$.
\end{definition}

We observe that an $\ell$-shifted reciprocal polynomial $p(x)$ is reciprocal if and only if its constant coefficient does not vanish (in which case $\ell = \deg{p}$) and that $\tilde{p}(x)$ is a real-valued function on the unit circle $\T$, where $\tilde{p}(e^{\ii \theta})$ is interpreted as $e^{-\ii \theta \ell/2}p(e^{\ii \theta})$ for real $\theta$.

\begin{definition}
\label{def:zero type}
Suppose the discriminant $\Delta(P)$ is shifted reciprocal (i.e., it is an $\ell$-shifted reciprocal polynomial for some $\ell$) and vanishes at $\alpha = e^{\ii \theta} \in \T$ for some $\theta \in [0,2\pi]$. Then we say that $\alpha$ is of \emph{type} $\mr{I}$ if there is an $\epsilon > 0$ such that $\tilde{\Delta}(P)$ switches sign on and has exactly one zero in the set $\{e^{\ii t} : t \in (\theta-\epsilon, \theta+ \epsilon)\}$.
Similarly, we say that $\alpha$ is of \emph{type} $\mr{II}^+$ (or $\mr{II}^-$) if there is an $\epsilon>0$ such that $\tilde{\Delta}(P)$ is nonnegative (or nonpositive) on and has exactly one zero in the set $\{e^{\ii t} : t \in (\theta-\epsilon, \theta +\epsilon)\}$.
\end{definition}

This brings us to the definition of permissible polynomials.

\begin{definition}
\label{def:goodP}
Let $P = a(x)(y^2+1)+b(x)y \in \Z[x,y]$. Then $P$ is a \emph{permissible polynomial} if it  satisfies the following seven conditions:
\begin{enumerate}[label=(\arabic*)]
    \item There is an integer $\ell \geq 1$ such that $a$ and $b$ are $2\ell$-shifted reciprocal polynomials; \label{cond:reciprocal} 
    \item $a$ is the product of cyclotomic polynomials times a power of $x$; \label{cond:cyclotomic} 
    \item If $\deg{b} \ge \deg{a}$ then the leading coefficient of $b$ lies in $\{1,2\}$, and if $\deg{b} > \deg{a}$ then the leading coefficient of $b$ equals $1$; \label{cond:leading-coefficient} 
    \item $g$ and $h$ are coprime, and $a$ and $b$ are coprime; \label{cond:coprime}
    \item $\deg(g) \leq 4$; \label{cond:degree} 
    \item If $\deg(g) \ge 3$, then $h$ vanishes at $1$ or $-1$ and $\ord_{\alpha} h(x) \in \{0, 1\}$ for each $\alpha \in \T$; \label{cond:squarefree}
    \item For each $\alpha \in \T$ with $h(\alpha)=0$ that is of type $\mr{II}^+$ and for which $\ord_{\alpha} h(x)$ is odd, there exist a multiquadratic extension $K_{\alpha}/\Q$ (i.e., a finite Galois extension with elementary abelian $2$-group as Galois group) such that $W_\alpha \coloneqq \{\alpha, \sqrt{g(\alpha)}\} \subset K_{\alpha}$. \label{cond:W}
\end{enumerate}
Furthermore, suppose $P$ is permissible. Denote by $F_\alpha$ the set of fundamental discriminants of the
imaginary quadratic subfields of $K_\alpha$. Then we say that $P$ is
\emph{$(f_1,\ldots,f_s)$-permissible} if
\[
\{-f_1,\ldots,-f_s\}
=
\bigcup_{\substack{h(\alpha)=0\\ \alpha\in\T\ \text{of type }\mr{II}^+}}
F_\alpha.
\]
\end{definition}

\begin{conjecture}
\label{con:ourcon}
    Suppose that $P$ is an $(f_1, \ldots, f_s)$-permissible polynomial. Then $P$ is a Chinburg polynomial. More explicitly, there exist rational numbers $r_1, \ldots, r_s \in \Q$ such that
    \begin{equation*}
        \m(P) = r_1 L'(\chi_{-f_1},-1) + \cdots +  r_s L'(\chi_{-f_s}, -1).
    \end{equation*}
\end{conjecture}

\begin{remark}
\label{rem:independence}
In the stated generality, a proof of Conjecture~\ref{con:ourcon} may require the assumption of linear independence of the $L$-values $L'(\chi_{-f}, -1)$. However, in many of our examples, each $K_{\alpha}$ as in Condition~\ref{cond:W} may be taken to have only one pair of imaginary embeddings, in which case the assumption of linear independence of $L$-values is presumably not needed.
\end{remark}

\begin{remark}
\label{rem:nonzero}
It may happen that $r_1, \ldots, r_s$ are all zero. In that case $\m(P) = 0$ and by \cite[Theorem~1]{Boyd}, this implies that $P$ is a \emph{generalized cyclotomic polynomial}. An example is the polynomial $P$ obtained when $a = x^2$ and $b = x^4 + 1$.
\end{remark}

Next, we illustrate Conjecture~\ref{con:ourcon} with a couple of examples. In practice, the main challenge is to determine if a unimodular root $\alpha$ of $h$ is of type $\mr{II}^+$ or $\mr{II}^-$ (if $\Delta = gh^2$ with $g$ and $h$ coprime, then $\alpha$ cannot be of type $\mr{I}$ since it occurs with even multiplicity). The easiest way to do this is to compute all roots of $\Delta$ that lie on the unit circle, organise them according to their argument, and use the intermediate value theorem to determine if $\tilde{\Delta}$ is positive or negative in the vicinity of $\alpha$ on the unit circle.

\begin{example}[Conductor 43]
\label{ex:conductor-43}
Consider $a(x) = (1 + x)^2 (x^6 - x^5 + x^4 - x^3 + x^2 - x + 1)$ and $b(x) = 2x^8 + 2x^7 - 39x^6 - 16x^5 + 110x^4 - 16x^3 - 39x^2 + 2x + 2$ and set $P = a(x)(y^2+1) + b(x)y$.
We will verify that $P$ is a $43$-permissible polynomial. Conditions \ref{cond:reciprocal}-\ref{cond:leading-coefficient} in Definition \ref{def:goodP} can be checked easily. For the other conditions, we first compute that $\Delta(P) = gh^2$ with
\[
g = -(39 + 94x + 39x^2) \quad \text{ and } \quad h = x(x-1)(2+x-10x^2+x^3+2x^4). 
\]
This verifies Conditions \ref{cond:coprime}-\ref{cond:squarefree} in Definition \ref{def:goodP}. In this case, $\alpha = 1$ is the only root of $h$ that lies on the unit circle, and one may verify that it is of type $\mr{II}^+$. Since $g(1) = -4 \cdot 43$, we conclude that $P$ is indeed $43$-permissible. Assuming Conjecture~\ref{con:ourcon}, the number $\m(P)/L'(\chi_{-43},-1)$ is rational.
Using \texttt{lindep} from \texttt{PARI/GP} \cite{PARI}, we find that $\m(P)/L'(\chi_{-43},-1) \doteq 2/7$; that is, they are equal to at least $500$ decimal digits precision.
\end{example}

\begin{example}[Conductors $35$ and $3$]
Let
\begin{equation*}
 a(x) = (x^4 - x^3 + x^2 - x + 1)^2, \quad b(x) = 2x^8 - 4x^7 - 3x^6 + 16x^5 - 24x^4 + 16x^3 - 3x^2 - 4x + 2
\end{equation*}
and set $P = a(x)(y^2+1)+b(x)y$. Then $\Delta(P) = gh^2$ with
\[
g = -(2x^2 - 3x + 2) (2x^2 - x + 2) \quad \text{ and } \quad h = (x - 1) x (x + 1) (3x^2 - 4x + 3). 
\]
Thus Conditions \ref{cond:reciprocal}-\ref{cond:squarefree} certainly hold. Apart from $\pm 1$, the polynomial $h$ has the two additional roots $(2 \pm \sqrt{-5})/3$ that are of absolute value $1$. However, both of those are of type $\mr{II}^-$. Now $g(1)= -3$ and $g(-1) = -35$, and so we conclude that $P$ is $(3,35)$-permissible. Using \texttt{lindep} from \texttt{PARI/GP} \cite{PARI}, we find that 
\begin{equation*}
\m(P) \doteq r_1 L'(\chi_{-35},-1) + r_2 L'(\chi_{-3},-1)
\end{equation*}
holds to at least $500$ digits precision when taking $r_1 = 1/10$ and $r_2 = 7/5$.
\end{example}

\begin{example}[Conductor $3$]
If
\begin{equation*}
    a(x) = (x^2+1)^4 \quad \text{ and } \quad  b(x) = 2x^8 - 16x^7 - 8x^6 + 16x^5 + 44x^4 + 16x^3 - 8x^2 - 16x + 2
\end{equation*}
then 
\[
g = -x(x^2 + x + 1) \quad \text{ and } \quad h = 8(x - 1)(x + 1) (x^4 - 2x^3 - 2x^2 - 2x + 1).
\]
The polynomial $h$ has four roots on the unit circle, but only the root $\alpha=1$ has type $\mr{II}^+$. Since $g(1) = -3$, we find that $W_\alpha \subset \Q(\sqrt{-3})$ so $P$ is permissible. And indeed, \texttt{PARI/GP} yields $\m(P) \doteq 10 L'(\chi_{-3},-1)$.
\end{example}

\begin{example}[Ray's conductor 7 polynomial]
Ray \cite{Ray} considered the polynomial $P(x,y) = a(x)(y^2+1) + b(x)y$ where
\begin{equation*}
a(x) = x^6+x^5+x^4+x^3+x^2+x+1, \qquad b(x) = 2x^6 + 2 x^5 - 5 x^4 - 12 x^3 - 5 x^2 + 2 x + 2.
\end{equation*}
In this case, we obtain
\[
g = -7 \quad \text{ and } \quad h = (x - 1) x (x + 1) (2x^2 + 3x + 2).
\]
The roots of $h$ on the unit circle are $\pm 1, \frac{-3 \pm \sqrt{-7}}{4}$;
these lie in $\Q(\sqrt{-7})$ and are all of type $\mr{II}^+$. (This is a consequence of the identity $h(x) = -x^6 h(1/x)$, so that $\tilde{h}(x)$ is purely imaginary on the unit circle, and that $g = \tilde{g}$ is negative.) Since $\sqrt{g} = \sqrt{-7} \in \Q(\sqrt{-7})$ as well, we conclude that $P$ is permissible. Ray proved the equality $\m(P) = \frac{8}{7} L'(\chi_{-7}, -1)$.
\end{example}

\begin{example}
Let us show why Condition~\ref{cond:squarefree} is supposedly important. Consider the case
\begin{equation*}
a(x) = x^2, \qquad b(x) = 1 + 4 x + 4 x^2 + 4 x^3 + x^4
\end{equation*}
so that
\begin{equation*}
g(x) = (1 + x^2)  (1 + 4 x + x^2), \qquad h=(x+1)^2;
\end{equation*}
this is not a permissible polynomial since $\deg g(x) \geq 3$ and $\ord_{-1}h(x) \not \in \{0,1\}$, but all other conditions of Definition~\ref{def:goodP} are satisfied. One may verify that the root $-1$ is of type $\mr{II}^-$, so that $F_{-1}$ is empty. However, the Mahler measure of $P$ is certainly nonzero.  In fact, we conjecture that $\m(a(x)(y^2+1) + b(x)y) = 2 L'(E_{24a1},0)$, where $E_{24a1}$ is the conductor $24$ elliptic curve, corresponding to the projective closure of the desingularization of $w^2 = g(x)$.
Another, similar example comes from the choice $a(x) = x^3 + x$ and $b(x) = 1 + 2 x + 6 x^2 + 2 x^3 + x^4$, in which case we conjecture $\m(a(x)(y^2+1) + b(x)) = 2L'(E_{32a1},0)$, where the conductor $32$ elliptic curve $E_{32a1}$ arises in a similar way. 
\end{example}

\begin{example}
Another example we cannot explain is $a(x) = (x^2 + x + 1)(x^4 - x^2 + 1)$ and $b(x) = (x - 1)^2 (2x^4 + 6x^3 - 17x^2 + 6x + 2)$, in which case $g(x) =  -3 (9x^2 - 16x + 9)(x^2 + 4x + 1)$ and $h(x) = x(2x^2+3x+2)$.  Here, $h$ does not vanish at $1$ or $-1$, although all other conditions of Definition~\ref{def:goodP} are satisfied. A computation similar to those in the examples above suggests a linear relation involving $L'(\chi_{-1155},-1)$ and $L'(\chi_{-7},-1)$, and perhaps $L'(E_{15912h1},0)$, where $E_{15912h1}$ is the elliptic curve of conductor $15912$ arising from the curve $w^2=g(x)$. However, no such relation was found with \texttt{PARI/GP}.
\end{example}

\begin{example}
\label{ex:BRV1}
Almost all examples in \cite{BRV1} can be put in the context of permissible polynomials, and their results can be recovered from our Conjecture~\ref{con:ourcon}.
The polynomials considered in \cite{BRV1} are of the form $p(x)y+q(x)$,
where $p$ and $q$ are products of cyclotomic polynomials times a power of $x$,
with $\gcd(p(x),q(x))=1$.
Suppose for simplicity that $p(x)$ and $q(x)$ are $\ell$-shifted reciprocal. (Often this can be assumed after some monomial transformation that preserves the Mahler measure: for example, $\m(1+x+y) = \m(1+x^2+y) = \m(1+x^2+xy)$, see, e.g., \cite[p.~34]{BZ}.) Note that
\[
 \m(p(x)y+q(x))
 =
 \m(q(x)y+p(x))
 =
 \frac12 \m(P),
\]
where $P\coloneqq a(x)(y^2+1)+b(x)y$ and
\[
 a(x)=p(x)q(x) \qquad \text{and} \qquad b(x)=p(x)^2+q(x)^2.
\]
This implies immediately that
Conditions~\ref{cond:reciprocal}, \ref{cond:cyclotomic} and \ref{cond:leading-coefficient} of Definition \ref{def:goodP} are satisfied for these $a(x)$ and $b(x)$. Furthermore, since
\[
 \Delta(P)(x)=(p(x)^2-q(x)^2)^2
\]
we find $g\equiv1$ and $h=p(x)^2-q(x)^2$, so Conditions~\ref{cond:coprime}, \ref{cond:degree} and \ref{cond:squarefree} are satisfied.

Moreover, $h(\alpha)=0$ for $\alpha\in\T$ if and only if $\alpha^\ell(p(\alpha)p(1/\alpha)-q(\alpha)q(1/\alpha))=0$, which is equivalent to $|p(\alpha)|=|q(\alpha)|$. Since $\tilde h(\alpha)$ is real-valued on the torus, $\tilde\Delta(P)(\alpha)$ is nonnegative, so any $\alpha$ is of type $\mr{II}^+$. Furthermore, the toric roots of $h(x)$ of odd order in Condition \ref{cond:W} are precisely the roots of what is called the \emph{crossing polynomial} in \cite{BRV1}. 
\end{example}

\begin{remark}
\label{rem:conjecture}
In this paper, we focus on generating examples, but here we briefly outline what we believe to be the main steps of a possible proof of
Conjecture~\ref{con:ourcon} following the approach of Boyd and
Rodriguez-Villegas in \cite{BRV2}. The conditions in
Definition~\ref{def:goodP} are of two types: geometric and algebraic.

Conditions \ref{cond:reciprocal}-\ref{cond:degree} are of geometric
origin. Let $Y$ be the zero locus of $P$ and let $X$ be its smooth
projective completion. Then Conditions \ref{cond:coprime} and
\ref{cond:degree} imply that $X$ has genus $0$ or $1$. Moreover,
Conditions \ref{cond:reciprocal}, \ref{cond:cyclotomic}, and
\ref{cond:leading-coefficient} imply that the symbol
$\{x,y\}$ lies in $K_2(X)\otimes\Q$, where $K_2(X)$ denotes the $K_2$-group of $X$. If the genus of $X$ is $0$, this implies
that $x \wedge y$ admits a \emph{triangulation} (see \cite[p.~3]{BRV2}). In the case that $X$ has genus $1$, we expect that the additional
Condition \ref{cond:squarefree} implies that $x \wedge y$ admits a
triangulation in that case as well. Indeed, the map
$\sigma \colon Y \to Y$ given by $(x,y) \mapsto (1/x,1/y)$ has fixed points, and the quotient $X/\langle \sigma\rangle$ has genus $0$.

Let $e_k \in X$ be a boundary point of the \emph{Deninger path}
(see \cite[p.~3]{BRV2}). We believe that the triangulations obtained
above imply the existence of an element $
\xi_k \in B(\overline{\Q}) \otimes \Q$,
in the \emph{Bloch group}. The Mahler measure can then be written as a
sum of the values $D(\xi_k)$, where $D$ denotes the
\emph{Bloch--Wigner dilogarithm}, taken over all boundary points $e_k$.

The final Condition \ref{cond:W} is more algebraic in nature, and we
expect that it guarantees $\xi_k \in B(L_k) \otimes \Q$, where $L_k$
is a multiquadratic field. Writing
$\{-f_{1,k}, \ldots, -f_{s,k}\}$ for the fundamental discriminants of
the quadratic subfields of $L_k$, and assuming the linear independence
of $L$-values, the group $B(L_k) \otimes \Q$ is generated over $\Q$ by
the values $L'(\chi_{-f_{j,k}}, -1)$. (In many of our examples, the fields $L_k$ can be taken to be quadratic imaginary, so no linear independence assumption is needed.) Summing over all boundary points $e_k$, which we believe correspond precisely to the zeros of $h(x)$ of odd order that have type $\mathrm{II}^+$, this would imply
Conjecture~\ref{con:ourcon}. It may be possible to derive this last step from the work of Guilloux--Marché \cite{GM}.
\end{remark}

\section{Algorithm and discussion of tables of new and old results}
\label{sec:tables}

In this section, we provide an overview of previously known and new results on Conjecture~\ref{con:Chinburg}, and discuss the algorithm that we used to obtain these. All tables with results are provided in the appendix.
Table~\ref{tab:previous_strong} provides an overview of examples of the strong form of Conjecture~\ref{con:Chinburg} that we could find in the literature; we only list one example per conductor.
Table~\ref{tab:new_strong} provides an overview of permissible polynomials that we found in this work giving examples of the strong form of Conjecture~\ref{con:Chinburg}, assuming Conjecture~\ref{con:ourcon}.
Lastly, Table~\ref{tab:T1} gives an overview of all other permissible polynomials that we found with their Mahler measures, which in all cases lead to examples of the weak form of Conjecture~\ref{con:Chinburg} assuming Conjecture~\ref{con:ourcon}. In Tables~\ref{tab:new_strong} and \ref{tab:T1}, we also give only one example per conductor or conductor tuple.
The claims in these tables are all of the form $\m(P) = r_{1} d_{f_1} + \cdots + r_{s} d_{f_s}$ for some rational, unknown numbers $r_1, \ldots, r_s$ and explicit conductors $f_1, \ldots, f_s$. In each case, we used \cite{LQ-code} for the numerical computation of $\m(P)$ and the \texttt{lindep} function of \texttt{PARI/GP} \cite{PARI} to determine the $r_j$ up to at least $500$ decimal digits.

\subsection*{Algorithm}

Our algorithm, and in fact the definition of permissibility, is motivated by the general structures appearing in
Table~\ref{tab:previous_strong}. Suppose $P$ is permissible. First, by Condition~\ref{cond:degree} we have
\[
\Delta(P) = (b(x)-2a(x))(b(x)+2a(x)) = g(x)h(x)^2,
\]
where $g$ has degree at most $4$ and $a(x)$ and $b(x)$ are as in Conditions~\ref{cond:reciprocal}, \ref{cond:cyclotomic} and \ref{cond:coprime}.
Moreover $\gcd(g(x),h(x))=1$ and $g(x)$ is squarefree.

Examining the examples in Table~\ref{tab:previous_strong} that are quadratics in $y$, we observe that in each case $h(1)=0$ or $h(-1)=0$. Furthermore,
in these examples one of the factors $b(x)-2a(x)$ or $b(x)+2a(x)$ is
of the form $c\,v(x)^2$, where $c\in\Z$ is a constant and $v(x)$ is an
$\ell$-shifted reciprocal polynomial, usually of small height.
The only exceptions in Table~\ref{tab:previous_strong} occur for the
polynomials corresponding to conductors $303$ and $755$. In those cases one finds a factorization of the form
\[
b(x)+2a(x) = v(x)^2 q(x),
\]
where $q(x)$ is a monic quadratic polynomial. We did not find new examples of the latter shape.

Motivated by these observations, we impose the following conditions on
our search space:
\begin{enumerate}[label=(\roman*)]
\item One of the factors $b(x)-2a(x)$ or $b(x)+2a(x)$ is of the form
\[
b(x)\pm 2a(x) = c v(x)^2,
\]
where $c\in\Z$ is nonzero and squarefree and $v(x)$ is an
$\ell$-shifted reciprocal polynomial. \label{cond: v}
\item Any $\alpha$ of type $\mathrm{II}^{+}$ has $[\Q(\alpha):\Q]\le 2$, and
and $\Q(\sqrt{g(\alpha)})$ is contained in a biquadratic extension of $\Q$ (equivalently, it is Galois with Galois group isomorphic to a subgroup of the Klein four-group).
\label{cond: Galois}
\item $h$ vanishes at $-1$ or $1$. \label{cond: h_vanish}
\end{enumerate}

Note that Condition~\ref{cond: Galois} immediately implies Condition~\ref{cond:W} in Definition~\ref{def:goodP}, but it is strictly stronger. For example, one could additionally allow the case where $\Q(\alpha)/\Q$ is biquadratic and
\[
\Gal(\Q(\sqrt{g(\alpha)})/\Q) \cong \Z/2 \times \Z/2 \times \Z/2.
\]
We chose not to incorporate this more general condition in our algorithm.

For $\ell \geq 2$, write
\[
v(x)=a_0+a_1x+\cdots+a_1x^{\ell-1}+a_0x^\ell .
\]
In the algorithm, we assume that the coefficients $a_0,\dots,a_{\lfloor (\ell - 1)/2 \rfloor}$ and the constant $c$ are bounded above in absolute value by a fixed constant $B$. If $\ell$ is even, the coefficient $a_{\ell/2}$ is determined by $h(\pm 1) = 0$ and the other coefficients $a_j$ in Condition \ref{cond: h_vanish}.
By replacing $v$ by $-v$ if necessary we may assume $a_0\ge0$. 
Condition~\ref{cond:cyclotomic} and Condition~\ref{cond:leading-coefficient} necessitate $a(0)\in\{0,\pm1\}$ and $b(0)\in\{0,1,2\}$, respectively, depending on whether or not $\deg b>\deg a$. Again, both cases are explored:
\begin{itemize}
    \item \textbf{Case 1:}  $a(0)\in\{\pm1\}$. Then $b(0)\in\{0,1,2\}$ and the relation $b\pm 2a = c v^2$ implies $c a_0^2 \in \{-2,-1,0,2,3,4\}$, so that
    \[
        (a_0,c)\in
        \{(1, -1),(1,\pm 2),(1,3),(2,1)\}
        \cup
        \{(0,c) : 0<|c|\le B,\; c\ \text{squarefree}\}.
    \]
\item \textbf{Case 2:} $a(0)=0$. Since $\gcd(a,b)=1$, we must have $b(0)=1$.  
In this case the relation $b\pm2a=cv^2$ gives $c a_0^2 = 1$, so that
\[
(a_0,c)=(1,1).
\]
\end{itemize}

We loop over all $a(x)$ as in Conditions~\ref{cond:reciprocal} and \ref{cond:cyclotomic} with $a_0$ and $c$ as above, and with 
$a_1,\dots,a_{\lfloor (\ell - 1)/2 \rfloor}$ bounded in absolute value by $B$, subject to Conditions~\ref{cond: v}, \ref{cond:squarefree} and \ref{cond: h_vanish}. 
For each resulting pair $(a,b)$, we verify Definition~\ref{def:goodP}, replacing Condition~\ref{cond:W} with the stronger Condition~\ref{cond: Galois}, and return the corresponding conductors $f_1,\ldots,f_s$.

In Table~\ref{tab:output} we list the number of distinct outputs of our algorithm 
for each $2 \le \ell \le 7$, where \emph{strong} means $s=1$ and \emph{weak} means $s>1$. 
The corresponding polynomials are collected in Tables~\ref{tab:new_strong} and~\ref{tab:T1}. 
For each tuple $(f_1,\ldots,f_s)$ produced by our algorithm, we record only one polynomial 
that is $(f_1,\ldots,f_s)$-permissible.  
The algorithm and the complete output of the algorithm when run as in Table~\ref{tab:output} can be found at
\begin{center} 
\url{https://github.com/davidhokken/chinburg}. 
\end{center} 

\section{Chinburg's weak conjecture for rational functions with cyclotomic coefficients}
\label{sec:algebraicChinburg}
Following Chinburg's original methods \cite{Chinburg}, we can prove Chinburg's weak conjecture for the conductor $f$ if we allow $P \in \Q(\zeta_{4f})(x,y)$. This is a consequence of the following theorem.

\begin{theorem}
\label{thm:algchin}
Let $\chi=\chi_{-f}$ be an odd quadratic character of conductor $f$, and put
\[
\alpha_f \coloneqq
\prod_{0<u<\frac{f}{2}}
\left|2\sin\!\left(\frac{\pi u}{f}\right)\right|^{- 2 \chi(u) u}.
\]
Then
\begin{equation}
\label{eq:algebraicChinburg}
\m\!\Bigg(
\alpha_f
\prod_{0<u<\frac{f}{2}}
\Big(x+y+2\sin\!\frac{\pi u}{f}\Big)^{\chi(u)f}
\Bigg)
=
2d_f.
\end{equation}
\end{theorem}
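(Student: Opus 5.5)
We compute the Mahler measure factor by factor, using $\m(PQ)=\m(P)+\m(Q)$ and $\m(P^{k})=k\,\m(P)$ for $k\in\Z$ (the exponents $\chi(u)f$ may be negative). The constant contributes $\m(\alpha_f)=\log\alpha_f=-\sum_{0<u<f/2}2\chi(u)u\log\bigl(2\sin\frac{\pi u}{f}\bigr)$. Here every $\sin(\pi u/f)$ with $0<u<f/2$ is positive, so the absolute values are harmless. The left-hand side of \eqref{eq:algebraicChinburg} therefore equals
\[
\log\alpha_f+f\sum_{0<u<\frac f2}\chi(u)\,\m\Bigl(x+y+2\sin\tfrac{\pi u}{f}\Bigr).
\]

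The key input is the classical evaluation of $\m(a+bx+cy)$ for positive reals $a,b,c$ that are the sides of a triangle (Cassaigne--Maillot). If $\gamma$ is the angle opposite $c$ and $\alpha,\beta$ are the angles opposite $a,b$, then
\[
\pi\,\m(a+bx+cy)=\alpha\log a+\beta\log b+\gamma\log c+D\Bigl(\tfrac{a}{b}e^{\ii\gamma}\Bigr),
\]
where $D$ is the Bloch--Wigner dilogarithm. We apply it with $a=b=1$ and $c=2\sin(\pi u/f)\in(0,2)$, so a triangle exists and is isosceles. Its apex angle is $\gamma=2\pi u/f$, since $c=2\sin(\gamma/2)$. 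This gives
\[
\m\Bigl(x+y+2\sin\tfrac{\pi u}{f}\Bigr)=\frac{2u}{f}\log\Bigl(2\sin\tfrac{\pi u}{f}\Bigr)+\frac1\pi D\bigl(e^{2\pi\ii u/f}\bigr).
\]
Multiplying by $f\chi(u)$ and summing over $u$, the logarithmic terms give exactly $\sum 2\chi(u)u\log(2\sin\frac{\pi u}{f})$, which cancels $\log\alpha_f$. This cancellation is precisely why $\alpha_f$ was chosen. We are left with $\frac{f}{\pi}\sum_{0<u<f/2}\chi(u)D(\zeta_f^u)$.

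Next we evaluate this character sum. On the unit circle $D(e^{\ii\theta})=\sum_{n\ge1}\sin(n\theta)/n^2$, which is odd in $\theta$, and $\chi$ is odd. Hence the summand $\chi(u)D(\zeta_f^u)$ is even in $u$, and the half-sum equals $\frac12\sum_{u\bmod f}\chi(u)D(\zeta_f^u)$. Interchanging the sums, this is
\[
\frac12\sum_{n\ge1}\frac{1}{n^2}\,\mathrm{Im}\sum_{u\bmod f}\chi(u)\zeta_f^{un}=\frac12\sum_{n\ge1}\frac{\chi(n)\,\mathrm{Im}\,\tau(\chi)}{n^2}.
\]
Here we use $\sum_u\chi(u)\zeta_f^{un}=\chi(n)\tau(\chi)$ for a primitive character, which also holds when $\gcd(n,f)>1$ since both sides then vanish. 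For the odd quadratic character, Gauss's evaluation gives $\tau(\chi)=\ii\sqrt f$. Therefore the left-hand side of \eqref{eq:algebraicChinburg} equals $\frac{f^{3/2}}{2\pi}L(\chi,2)$.

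Finally, the functional equation for odd primitive $\chi$ gives $L'(\chi,-1)=\frac{f^{3/2}}{4\pi}L(\chi,2)$. Hence the left-hand side equals $2d_f$.

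The main obstacle is the first step: one must cite the triangle formula for $\m(a+bx+cy)$ in its precise form, including the dilogarithm argument and the fact that the relevant angle is the one opposite $c$, and check that it applies to every $u$ in the range. If that citation is unavailable, I would derive the formula directly. Integrating over $y$ with Jensen's formula reduces $\m(x+y+c)$ to $\frac1{2\pi}\int\log^+|x+c|\,\d\theta$, a one-variable integral over the arc where $|x+c|\ge1$; its endpoints are at angle $\pm(\pi-\gamma/2)$, and the integral evaluates to Clausen values plus the $\log c$ term. The remaining steps are routine bookkeeping with Gauss sums.
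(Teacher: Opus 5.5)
Your proposal is correct and follows the paper's proof. You apply Cassaigne--Maillot to the isosceles triangle with sides $1,1,2\sin(\pi u/f)$ and apex angle $2\pi u/f$, cancel the logarithmic terms against $\log\alpha_f$, and reduce to $\frac{f}{\pi}\sum_{0<u<f/2}\chi(u)D(e^{2\pi\ii u/f})$, exactly as the paper does. The only difference is that the paper cites Boyd's identity $2d_f=\frac{f}{2\pi}\sum_{0<u<f}\chi(u)D(e^{2\pi\ii u/f})$, whereas you derive it yourself from the Clausen series, the Gauss sum $\tau(\chi_{-f})=\ii\sqrt f$ and the functional equation $d_f=\frac{f^{3/2}}{4\pi}L(\chi_{-f},2)$, and that derivation is correct.
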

\begin{remark}
Note that
\[
\alpha_f
\prod_{0<u<\frac{f}{2}}
\left(x+y+2\sin\!\left(\frac{\pi u}{f}\right)\right)^{\chi(u)f} \in \Q(\zeta_{4f})(x,y).
\]
Thus Theorem~\ref{thm:algchin} gives a positive answer to Chinburg's weak conjecture when one allows rational functions with cyclotomic coefficients.  
\end{remark}
\begin{proof}
We use the formula of Cassaigne--Maillot \cite[Proposition~7.3.1]{Ma}: If
$a,b,c\in\C$ are such that $|a|,|b|,|c|$ are the side lengths of a
non-degenerate triangle with opposite angles $\alpha,\beta,\gamma$, then
\[
\m(ax+by+c)
=
\frac{1}{\pi}
\left(
\alpha\log|a|+\beta\log|b|+\gamma\log|c|
+D\!\left(\frac{|b|}{|a|}e^{i\gamma}\right)
\right),
\]
where $D(z)$ denotes the Bloch--Wigner dilogarithm. 
Now, for $0<u<f/2$, set
\[
G(u) \coloneqq \m\!\left(x+y+2\sin\!\left(\frac{\pi u}{f}\right)\right).
\]
Applying the Cassaigne--Maillot formula to the nondegenerate, isosceles triangle with side
lengths $|a|=|b|=1$ and $|c|=|2\sin(\pi u/f)|$, whose opposite angles
are $\alpha=\beta=\pi/2-\pi u/f$ and $\gamma=2\pi u/f$, we obtain
\begin{equation*}
G(u)=\frac{2u}{f}\log\left|2\sin\!\left(\frac{\pi u}{f}\right)\right|
+\frac{1}{\pi}D\!\left(e^{2\pi i u/f}\right). 
\end{equation*}
Hence by additivity of the Mahler measure, the left-hand side of \eqref{eq:algebraicChinburg} equals
\begin{equation}
\label{eq:AC2}
\log(\alpha_f) + f\sum_{0<u<\frac{f}{2}}\chi(u)G(u) = \frac{f}{\pi}\sum_{0<u<\frac{f}{2}}\chi(u)\, D\!\left(e^{2\pi i u/f}\right) 
\end{equation}
where we used the definition of $\alpha_f$ to cancel terms.
On the other hand, using $D(\bar z)=-D(z)$ and $\chi(-u)=-\chi(u)$, we have
\begin{equation}
\label{eq:AC3}
2d_f
=
\frac{f}{2\pi}\sum_{0<u<f}\chi(u)\,D\!\left(e^{2\pi i u/f}\right) = 
\frac{f}{\pi}\sum_{0<u<\frac{f}{2}}\chi(u)\,
D\!\left(e^{2\pi i u/f}\right)
\end{equation}
where the first step is an identity given in \cite[p.~74]{Boyd-L}.
Combining \eqref{eq:AC2} and \eqref{eq:AC3} concludes the proof.
\end{proof}

\newpage
\appendix
\section{Tables}

\begin{table}[ht!] 
\centering
\renewcommand{\arraystretch}{1.0}
\tiny{
\begin{tabular}{llll} 
\toprule 
$\m(P)$ & $P(x,y)$ & Source \\
\midrule
$d_3$ (!)& $ 1+x+y$ & \cite{Smyth} \\
$d_4$ (!)& $(1-x)y+(1+x)$ &  \cite{Boyd-L} \\
$\frac{8}{7} d_7$ (!)& $ (x^6+x^5+x^4+x^3+x^2+x+1) (y^2+1) + (2x^6 + 2 x^5 - 5 x^4 - 12 x^3 - 5 x^2 + 2 x + 2) y$ & \cite{Ray} \\
$d_8$ (!)& $ (x^4+1)(y^2+1)+(2x^4-8x^2+2)y$ & \cite{Ray} \\
$\frac{2}{3} d_{11}$ & $ (x+1)^2(x^2+x+1)y-(x^2-x+1)^2$ & \cite{BRV1} \\
$6d_{15}$ & $ (x+1)^2y-(x^2-x+1)$ & \cite{BRV1} \\
$\frac{2}{5} d_{19}$& $ (x^5+x^4+x^3+x^2+x)(y^2+1)+(x^6+6x^5+2x^4-8x^3+2x^2+6x+1)y$ &  \cite{BRV2} \\
$\frac{2}{5} d_{20}$ (!)& $ (x^8-x^6+x^4-x^2+1)(y^2+1)+(2x^8 - 22 x^6 + 42 x^4 - 22 x^2 + 2)y$ & \cite{Ray} \\
$\frac{1}{6}d_{23}$& $ (x^5-x^3+x) (y^2+1)+(x^6-6x^5+12x^3-6x+1)y$ & \cite{LQ} \\
$\frac{1}{3}d_{24}$ (!)& $ (x^8-x^4+1)(y^2+1)+(2x^8 - 24 x^6 + 46 x^4 - 24 x^2 + 2)y$ & \cite{Ray} \\
$\frac{1}{12} d_{35}$ & $ (x^2+x+1)^2y-(x^2+1)(x^2-x+1)$ & \cite{BRV1} \\
$\frac{1}{18}d_{39}$& $ (x+1)^4y-(x^2+x+1)(x^2-x+1)$ & \cite{BRV1} \\
$\frac{1}{6}d_{40}$&  $(x^4 - x^3 + 2 x^2 - x + 1)(y^2+1)+x(14x^2-32x+14)y$ & \cite{BRV2} \\
$\frac{1}{30}d_{55}$& $ (x+1)^2(x^2+x+1)y-(x^4-x^3+x^2-x+1)$ & \cite{BRV1} \\
$\frac{1}{36}d_{84}$& $ (x+1)^4y-(x^2+1)(x^2-x+1)$ & \cite{BRV1} \\
$\frac{1}{36}d_{120}$& {$(x^2+1)(x^2+x+1)(y^2+1)+2(x^2-3x+1)(x^2+4x+1)y$} & \cite{BRV2} \\
$\frac{1}{132}d_{303}$ & ${(x^8+x^7+x^6+x^2+x+1)(y^2+1)+(2x^8+2x^7-49x^6+2x^5+98x^4+2x^3-49x^2+2x+2)y}$ &  \cite{LQ} \\
$\frac{1}{410}d_{755}$ & ${(x^8+x^6+x^4+x^2+1)(y^2+1)+(2x^8-37x^6+5x^5+70x^4+5x^3-37x^2+2)y}$ & \cite{LQ} \\
\bottomrule
\end{tabular}}
\caption{Overview of examples of the strong form of Conjecture~\ref{con:Chinburg} known prior to this work; we only list one example per conductor. The second column contains polynomials $P(x,y)$, with their Mahler measures $\m(P)$ listed in the first column. In each example, it is known that $r_f \coloneqq \m(P)/d_f \in \Q^{\times}$, but the listed values of $r_f$ are inferred from numerical computation only, except for the proven ones marked with (!). All polynomials linear in $y$ except the one for $d_4$ (as $x - 1$ is not shifted reciprocal) can be made into a permissible polynomial by Example \ref{ex:BRV1}.}
\label{tab:previous_strong}
\end{table}

\begin{table}[ht!] 
\centering
\renewcommand{\arraystretch}{1.2}
\tiny
\begin{tabular}{ll} 
\toprule 
$\m(P)$ & $a(x); \ b(x)$ \\
\midrule
$\frac{2}{7}d_{43}$ & $x^8 + x^7 + x + 1$ \\ 
& $2x^8 + 2x^7 - 39x^6 - 16x^5 + 110x^4 - 16x^3 - 39x^2 + 2x + 2$ \\
$\frac{2}{21}d_{52}$ & $x^8 - 2x^7 + 3x^6 - 3x^5 + 3x^4 - 3x^3 + 3x^2 - 2x + 1$ \\
                     &$2x^8 - 4x^7 - 10x^6 + 6x^5 + 14x^4 + 6x^3 - 10x^2 - 4x + 2$ \\
$\frac{1}{15}d_{56}$ & $x^8 - x^7 + x^4 - x + 1$ \\ 
& $2x^8 - 2x^7 - 15x^6 + 4x^5 + 24x^4 + 4x^3 - 15x^2 - 2x + 2$ \\
$\frac{1}{18}d_{68}$ & $x^8 - x^7 + x^6 - x^5 + x^4 - x^3 + x^2 - x +1$ \\ 
& $2x^8 - 2x^7 - 17x^6 + 6x^5 + 24x^4 + 6x^3 - 17x^2 - 2x + 2$ \\
$\frac{1}{54}d_{111}$ & $x^8 - x^7 + x^6 + x^5 - x^4 + x^3 + x^2 - x + 1$ \\
& $x^8 + 8x^7 - 11x^6 - 8x^5 + 26x^4 - 8x^3 - 11x^2 + 8x + 1$ \\
$\frac{1}{30}d_{132}$ & $x^8 + x^5 + x^3 + 1$ \\
& $2x^8 - 32x^6 - 2x^5 + 72x^4 - 2x^3 - 32x^2 + 2$ \\
$\frac{1}{54}d_{228}$ & 
$x^8 + 2x^7 + x^6 - x^5 - 2x^4 - x^3 + x^2 + 2x + 1$ \\
& $2x^8 + 4x^7 - 46x^6 - 38x^5 + 164x^4 - 38x^3 - 46x^2 + 4x + 2$ \\
$\frac{1}{252} d_{696}$ & $x^{12} + 2x^{10} + 2x^8 + 2x^6 + 2x^4 + 2x^2 + 1$ \\& $x^{12} - 54x^{11} + 245x^{10} + 54x^9 - 967x^8 + 1466x^6 - 967x^4 + 54x^3 + 245x^2 - 54x + 1$ \\
\bottomrule
\end{tabular}
\caption{Overview of permissible polynomials giving rise to examples of the strong form of Conjecture~\ref{con:Chinburg} new in this work, assuming Conjecture~\ref{con:ourcon}. The second column gives two polynomials $a(x)$ and $b(x)$ such that the polynomial $P(x,y) = a(x)(y^2+1)+b(x)y$ is permissible. The first column then gives the corresponding numerically inferred Mahler measure $\m(P)$.}
\label{tab:new_strong}
\end{table}
\clearpage

{\tiny
\renewcommand{\arraystretch}{0.9}
\begin{longtable}{l  p{0.6\textwidth}}

\toprule
$\m(P)$ & $a(x); \ b(x)$ \\
\midrule
\endfirsthead

\toprule
$\m(P)$ & $a(x); \ b(x)$ \\
\midrule
\endhead

\bottomrule
\endfoot

$\frac{8}{5}(-d_{4} + 5d_{3})$  & $x^4 - x^3 + x^2 - x + 1; 2 x^4 - 6 x^3 + 10 x^2 - 6 x + 2$ \\

$\frac{1}{3} (3 d_7 + 5 d_3)$  & $x^6 - x^5 + 3x^4 - 2x^3 + 3x^2 - x + 1; 2x^6 - 2x^5 + 2x^4 - 12x^3 + 2x^2 - 2x + 2$ \\

$ \frac{4}{15}(7d_{7} - 4d_{4})$  & $x^8 + 3x^7 + 3x^6 + x^5 + x^3 + 3x^2 + 3x + 1$\\& $ x^8 + 4x^7 - 3x^6 - 8x^5 - 20x^4 - 8x^3 - 3x^2 + 4x + 1$\\

$\frac{1}{2}(-d_{8} + 27d_{3})$ & $x^5 + x; x^6 + 10x^5 + 24x^4 + 34x^3 + 24x^2 + 10x + 1$\\

$ \frac{4}{15}(d_8 + d_4) $  & $x^4 - x^3 + x^2 - x + 1; x^4 - 2x^3 - 2x + 1$\\

$ \frac{1}{3} (-5d_8 + 12 d_7)$  &  $x^6 + 1 ; 2x^6 + 32 x^5 + 96 x^4 + 136 x^3 + 96 x^2 + 32 x + 2$ \\

$\frac{2}{3}(2 d_{11} + d_{3})$  & $x^6 + x^5 + 2x^4 + x^3 + 2x^2 + x + 1; 2x^6 - 22x^5 + x^4 + 56x^3 + x^2 - 22x + 2$ \\

$ \frac{2}{5} (d_{11} + 4 d_4)$  & $x^6 - x^5 - x + 1; 2x^6 - 2x^5 - 3x^4 + 10x^3 - 3x^2 - 2x + 2$\\

$\frac{1}{6} (3d_{15} + 10 d_3)$  & $x^6 - x^5 + 3x^4 - 2x^3 + 3x^2 - x + 1; 2x^6 - 14x^5 - 6x^4 + 44x^3 - 6x^2 - 14x + 2$\\

$\frac{1}{30}(d_{15} + 26 d_7)$  & $x^6 + x^4 + x^3 + x^2 + 1 ; 2x^6 - 3x^4 - 8x^3 - 3x^2 + 2$\\

$ \frac{1}{21} (d_{15} + 4 d_{7} + 8 d_4)$  & $x^6 - x^5 + x^4 - x^3 + x^2 - x + 1; x^6 - 2x^5 + 2x^4 - 4x^3 + 2x^2 - 2x + 1$\\

$ \frac{1}{30} (5d_{15} - 2 d_8)$  & $x^6 + x^5 + 2x^4 + 2x^3 + 2x^2 + x + 1 ; x^6 + 2x^5 + 4x^4 + 2x^3 + 4x^2 + 2x + 1$\\

$\frac{5}{9}(d_{19} + d_3)$  & $x^6 + x^3 + 1;  x^6 - 12x^5 + 28x^3 - 12x + 1$\\

$\frac{1}{4}(d_{20} + 4d_{4})$   & $x^6 + 3x^4 + 3x^2 + 1; 2x^6 - 2x^4 - 16x^3 - 2x^2 + 2$\\

$ \frac{2}{15}(d_{20} + 4 d_8)$   & $x^6 + x^4 - x^3 + x^2 + 1; 2x^6 - 2x^4 + 10x^3 - 2x^2 + 2$\\

$\frac{1}{3}(d_{20} + d_{11})$  & $x^{14} - x^{13} + 4x^{12} - 3x^{11} + 6x^{10} - 3x^9 + 5x^8 - 2x^7 + 5x^6 - 3x^5 + 6x^4 - 3x^3 + 4x^2 - x + 1$\\ &$2x^{14} - 2x^{13} - 7x^{12} - 14x^{11} - 22x^{10} + 16x^9 + 11x^8 + 64x^7 + 11x^6 + 16x^5$ \\ & $\qquad- 22x^4 - 14x^3 - 7x^2 - 2x + 2$\\

$\frac{2}{15}(d_{20} +3d_{15} )$  & $x^{12} + 3x^{11} + 5x^{10} + 4x^9 - 5x^7 - 7x^6 - 5x^5 + 4x^3 + 5x^2 + 3x + 1 $\\& $ 2x^{12} + 6x^{11} + 15x^{10} + 48x^9 + 80x^8 + 110x^7 + 136x^6 + 110x^5 + 80x^4 + 48x^3$ \\& \qquad $+ 15x^2 + 6x + 2$\\

$\frac{1}{14}(d_{23} + d_7)$ & $ x^6 - x^5 + x^4 - x^3 + x^2 - x + 1 ; 2x^6 - 2x^5 + x^4 - 4x^3 + x^2 - 2x + 2$\\

$\frac{2}{15}(d_{23} + d_{15})$ & $x^6 + 2x^5 + 3x^4 + 3x^3 + 3x^2 + 2x + 1$; $x^6 + 6x^4 + 16x^3 + 6x^2 + 1$ \\

$\frac{1}{2}(-d_{24} + 12d_7)$ & $x^8 - 4x^7 + 6x^6 - 4x^5 + 2x^4 - 4x^3 + 6x^2 - 4x + 1$\\&$  2x^8 - 8x^7 - 188x^6 - 488x^5 - 684x^4 - 488x^3 - 188x^2 - 8x + 2$\\

$\frac{1}{3}(d_{24} - 2d_{8})$ & $x^4 - x^2 + 1; 2x^4 - 8x^3 + 14x^2 - 8x + 2$\\
 
$\frac{1}{9}(4d_{24} - 20d_{8} - 28d_4 +120d_3)$ & $x^8 - 2x^7 + x^6 + x^5 - 2x^4 + x^3 + x^2 - 2x + 1 $\\& $ 2x^8 + 12x^7 + 18x^6 + 22x^5 + 36x^4 + 22x^3 + 18x^2 + 12x + 2$\\
 
$\frac{1}{5}(2d_{24} - d_{20})$ & $x^6 - 3x^5 + 4x^4 - 4x^3 + 4x^2 - 3x + 1; 2x^6 - 6x^5 + 4x^4 - 16x^3 + 4x^2 - 6x + 2$ \\
 
$\frac{2}{21}(d_{31} + 4d_7)$ & $x^6 - x^5 + x^4 - x^3 + x^2 - x + 1$ ; $x^6 + 2x^5 + 2x^4 - 12x^3 + 2x^2 + 2x + 1$ \\

$\frac{2}{15}(d_{31} + d_{15})$ & $x^6 - 2x^5 + 3x^4 - 3x^3 + 3x^2 - 2x + 1$ ; $2x^6 - 8x^5 - 9x^4 + 32x^3 - 9x^2 - 8x + 2$ \\

$\frac{1}{6}(d_{35} + 6d_3)$ & $x^6 - x^5 + x^3 - x + 1; 2x^6 - 10x^5 - 3x^4 + 24x^3 - 3x^2 - 10x + 2$\\

$\frac{1}{30}(5 d_{35} + 3d_{15})$ & $ x^{10} - 2x^9 + 5x^8 - 7x^7 + 10x^6 - 10x^5 + 10x^4 - 7x^3 + 5x^2 - 2x + 1 $\\& $ 2x^{10} - 4x^9 - 14x^8 + 14x^7 - 16x^6 + 44x^5 - 16x^4 + 14x^3 - 14x^2 - 4x + 2$\\

$\frac{1}{70}(-d_{35} + 10d_{15} + 114d_{3})$ &
$x^{10} - 2x^9 + 3x^8 - 4x^7 + 5x^6 - 5x^5 + 5x^4 - 4x^3 + 3x^2 - 2x + 1 $\\& $ 2x^{10} - 4x^9 + 5x^8 - 8x^7 + 10x^6 - 12x^5 + 10x^4 - 8x^3 + 5x^2 - 4x + 2$\\

$\frac{1}{12} (d_{39} + 20d_3)$ & $x^6 + x^5 + 3x^4 + 2x^3 + 3x^2 + x + 1; 2x^6 + 2x^5 - 6x^4 - 20x^3 - 6x^2 + 2x + 2$\\

$ \frac{1}{21}(d_{39} + 18 d_4)$ & $x^6 + x^5 + x^4 + x^3 + x^2 + x + 1; 2x^6 + 2x^5 + 3x^4 + 8x^3 + 3x^2 + 2x + 2$\\

$\frac{2}{21}(d_{39} + 3d_7)$ & $ x^6 - x^5 + x^4 - x^3 + x^2 - x + 1; 2x^6 - 10x^5 - 5x^4 + 28x^3 - 5x^2 - 10x + 2$\\

$\frac{1}{45}(4 d_{39} + 3d_{15})$ & $x^6 + 2x^5 + 3x^4 + 3x^3 + 3x^2 + 2x + 1; 2x^6 + 4x^5 - 9x^4 - 24x^3 - 9x^2 + 4x + 2$\\

$\frac{1}{90}(-d_{39} + 12d_{24} + 12d_{15})$ &$x^6 + x^4 - x^3 + x^2 + 1; x^6 - 2x^4 + 8x^3 - 2x^2 + 1$\\

$\frac{1}{10}(d_{40} + 2d_{3})$ & $ x^6 - x^5 + 2x^4 - 2x^3 + 2x^2 - x + 1; 2x^6 - 2x^5 - 7x^4 + 14x^3 - 7x^2 - 2x + 2$\\

$ \frac{4}{15}(d_{40} -11d_{4} )$ & $x^8 - x^5 - x^3 + 1 $\\& $ 2x^8 + 16x^7 + 48x^6 + 82x^5 + 104x^4 + 82x^3 + 48x^2 + 16x + 2$\\

$ \frac{2}{21}(2d_{40} - 3d_{7})$ & $x^{10} - 3x^9 + 6x^8 - 8x^7 + 9x^6 - 9x^5 + 9x^4 - 8x^3 + 6x^2 - 3x + 1$\\& $ x^{10} - 8x^9 + 17x^8 - 4x^7 + 13x^6 - 40x^5 + 13x^4 - 4x^3 + 17x^2 - 8x + 1$\\

$\frac{1}{10} (d_{40} + 6d_8)$ & $x^8 + 2x^7 + 3x^6 + 4x^5 + 5x^4 + 4x^3 + 3x^2 + 2x + 1 $\\& $ 2x^8 + 4x^7 + 2x^6 - 16x^5 - 34x^4 - 16x^3 + 2x^2 + 4x + 2$\\

$ \frac{4}{15}(d_{40} - d_{20}) $ & $x^8 + x^7 - x^5 - x^4 - x^3 + x + 1 $\\& $ 2x^8 - 18x^7 + 40x^6 - 62x^5 + 78x^4 - 62x^3 + 40x^2 - 18x + 2$\\

$\frac{2}{15}(d_{40} + d_{24})$ & $x^{12} + 2x^{10} - 2x^9 + 3x^8 - 2x^7 + 5x^6 - 2x^5 + 3x^4 - 2x^3 + 2x^2 + 1$\\ & $2x^{12} - 28x^{10} + 12x^9 + 6x^8 - 20x^7 + 74x^6 - 20x^5 + 6x^4 + 12x^3 - 28x^2 + 2$\\

$\frac{1}{9}(d_{43} + 15d_3)$ & $x^6 + x^3 + 1$ ; $x^6 - 4x^5 + 12x^3 - 4x+ 1$ \\

$\frac{1}{14}(d_{47} + 3 d_7)$ & $x^{10} - x^9 + 3x^8 - 3x^7 + 4x^6 - 4x^5 + 4x^4 - 3x^3 + 3x^2 - x + 1$ \\
                                & $2x^{10} - 2x^9 - 17x^8 + 3x^6 + 36x^5 + 3x^4 - 17x^2 - 2x + 2$ \\

$\frac{1}{6}(d_{51} + 2d_3)$ & $x^6 - 3x^5 + 6x^4 - 7x^3 + 6x^2 - 3x + 1$ ; $x^6 + 30x^5 + 12x^4 - 88x^3 + 12x^2$ \\&\qquad $+ 30x + 1$ \\

$\frac{1}{12}(d_{51} + 6d_{11})$ & $x^8 + 2x^7 + 2x^6 - x^4 + 2x^2 + 2x + 1 $\\& $  x^8 + 6x^7 + 15x^6 - 10x^5 - 42x^4 - 10x^3 + 15x^2 + 6x + 1$\\

$\frac{1}{9}(d_{51} + 2d_{19})$ & $x^8 + x^7 + x^6 + x^5 + x^4 + x^3 + x^2 + x + 1$ \\
                                & $2x^8 - 6x^7 - 33x^6 + 10x^5 + 72x^4 + 10x^3 - 33x^2 - 6x + 2$ \\

$\frac{1}{12}(d_{52} + 12d_4)$ & $x^6 + x^4 + x^2 + 1; x^6 + 6x^5 - x^4 - 20x^3 - x^2 + 6x + 1$\\

$\frac{1}{20}(d_{52} + d_{20})$ & $x^6 - x^5 + 2x^4 - 2x^3 + 2x^2 - x + 1; 2x^6 - 2x^5 + 2x^4 - 8x^3 + 2x^2 - 2x + 2$\\

$\frac{1}{21} (d_{55} + 10d_{7})$ & 
$x^{10} - 3x^9 + 6x^8 - 8x^7 + 9x^6 - 9x^5 + 9x^4 - 8x^3 + 6x^2 - 3x + 1 $\\& $ 2x^{10} - 6x^9 - 3x^8 + 16x^7 - 30x^6 + 44x^5 - 30x^4 + 16x^3 - 3x^2 - 6x + 2$\\

$\frac{2}{15} (d_{56}-32d_4)$  & $x^6 + 2x^5 + 3x^4 + 3x^3 + 3x^2 + 2x + 1; 2x^6 + 20x^5 + 54x^4 + 74x^3 + 54x^2 + 20x + 2$ \\

$ \frac{2}{21} (-d_{56} + 26d_{8} + 38d_{4} )$ & $x^{10} - x^7 - x^3 + 1 $\\& $  2x^{10} + 16x^9 + 48x^8 + 98x^7 + 144x^6 + 168x^5 + 144x^4 + 98x^3 + 48x^2 + 16x + 2$ \\

$\frac{1}{9}(d_{59} + 7d_3)$ & $x^{10} + 2x^9 + 3x^8 + x^7 - x^6 - 3x^5 - x^4 + x^3 + 3x^2 + 2x + 1$ \\
                             & $2x^{10} + 4x^9 - 30x^8 - 58x^7 + 31x^6 + 120x^5 + 31x^4 - 58x^3 - 30x^2 + 4x + 2$ \\

$\frac{1}{15}(d_{67} + 9 d_3)$ & $x^{10} - 2x^9 + 2x^8 - 2x^6 + 3x^5 - 2x^4 + 2x^2 - 2x + 1$ \\
                               & $2x^{10} - 4x^9 - 4x^8 + x^6 + 12x^5 + x^4 - 4x^2 - 4x + 2$ \\
                               
$\frac{1}{16}(d_{68} + 8d_4)$ & $x^6 + x^4 + x^2 + 1$ ; $2x^6 - 16x^5 - 2x^4 + 40x^3 - 2x^2 - 16x + 2$ \\

$\frac{1}{18}(d_{68} - 4d_{19} + 24d_3)$ & $x^6 - x^3 + 1; 2x^6 - 4x^5 - 3x^4 + 12x^3 - 3x^2 - 4x + 2$\\

$\frac{1}{54}(d_{84} + 120 d_{4})$ & $x^8 + x^7 + x^6 - x^5 - x^4 - x^3 + x^2 + x + 1 $\\& $ 2x^8 + 2x^7 + 2x^6 - 2x^5 - 14x^4 - 2x^3 + 2x^2 + 2x + 2$\\

$\frac{2}{21}(d_{84} - 18d_{7})$ & $x^6 - x^5 + x^4 - x^3 + x^2 - x + 1;2x^6 + 26x^5 + 58x^4 + 82x^3 + 58x^2 + 26x + 2$\\

$\frac{1}{36}(d_{84} + 6 d_{20})$ & $x^8 - 2x^7 + 3x^6 - 2x^5 + 2x^4 - 2x^3 + 3x^2 - 2x + 1 $\\& $ 2x^8 - 4x^7 + 4x^6 + 12x^5 - 32x^4 + 12x^3 + 4x^2 - 4x + 2$\\

$\frac{1}{45}(d_{87} + 4d_{15})$ & $x^{10} - x^9 + x^8 - 2x^7 + 2x^6 - x^5 + 2x^4 - 2x^3 + x^2 - x + 1$ \\
                                   & $2x^{10} - 2x^9 - 9x^8 + 20x^5 - 9x^2 - 2x + 2$ \\
                                   
$\frac{1}{15} (2 d_{88} - d_{68} + 16 d_{4})$ & $x^{16} - 4x^{15} + 9x^{14} - 12x^{13} + 10x^{12} - 4x^{11} + x^8 - 4x^5 + 10x^4 - 12x^3 + 9x^2 - 4x + 1$\\ & $2x^{16} - 8x^{15} - 18x^{14} + 56x^{13} + 52x^{12} - 104x^{11} - 128x^{10} + 16x^9$ \\ & $\qquad + \; 266x^8 + 16x^7 - 128x^6 - 104x^5 + 52x^4 + 56x^3 - 18x^2 - 8x + 2$ \\

$\frac{1}{36}(d_{107} + 21 d_3)$ & $x^{10} - x^9 + x^8 - x^6 + x^5 - x^4 + x^2-x+1$ \\ 
                                  & $2x^{10} - 2x^9 - 9x^8 - 4x^7 + 6x^6 + 16x^5 + 6x^4 - 4x^3 - 9x^2 - 2x + 2$ \\

$\frac{1}{90}(d_{111} + 6d_{15})$  & $x^6 + x^4 - x^3 + x^2 + 1; 2x^6 - x^4 - 8x^3 - x^2 + 2$ \\

$\frac{1}{15}(d_{120} - 84 d_{4})$  & $x^6 - x^5 - x + 1; 2x^6 + 34x^5 + 96x^4 + 136x^3 + 96x^2 + 34x + 2$\\

$\frac{1}{54}(d_{120} + 48d_{8})$  & $x^8 + x^7 + x^6 - x^5 - x^4 - x^3 + x^2 + x + 1 $ \\& $  x^8 - 2x^7 - 26x^6 + 2x^5 + 56x^4 + 2x^3 - 26x^2 - 2x + 1$\\

$\frac{1}{45}(d_{123} + 208d_3)$ &  
$x^{14} - 2x^{13} + 3x^{12} - 5x^{11} + 7x^{10} - 7x^9 + 8x^8 - 9x^7 + 8x^6 - 7x^5 + 7x^4 - 5x^3 + 3x^2 - 2x + 1$ \\
& $2x^{14} - 4x^{13} + 6x^{12} - 10x^{11} - 10x^{10} + 22x^9 - 35x^8 + 60x^7 - 35x^6 + 22x^5 - 10x^4$\\& $\qquad - 10x^3 + 6x^2 - 4x + 2$ \\

$\frac{1}{48}(d_{132} + 40 d_4)$  & $x^6 + 2x^5 + 4x^4 + 4x^3 + 4x^2 + 2x + 1; 2x^6 + 4x^5 - 10x^4 - 28x^3 - 10x^2 + 4x + 2$\\

$\frac{1}{60}(d_{136} + 2d_{56})$  & 
$x^{12} + x^{11} + 2x^{10} + x^9 - 2x^7 - 2x^6 - 2x^5 + x^3 + 2x^2 + x + 1$ \\
& $2x^{12} + 2x^{11} - 19x^{10} - 18x^9 - 4x^8 + 16x^7 + 50x^6 + 16x^5 - 4x^4 - 18x^3 - 19x^2 + 2x + 2$ \\

$\frac{1}{80}(d_{164} + 4 d_{20})$ & 
$x^{10} - x^9 + 2x^8 - 2x^7 + 3x^6 - 2x^5 + 3x^4 - 2x^3 + 2x^2 - x + 1$ \\
& $2x^{10} - 2x^9 - 15x^8 - 2x^7 + 5x^6 + 32x^5 + 5x^4 - 2x^3 - 15x^2 - 2x + 2$ \\

$\frac{1}{90}(d_{183} + 24 d_{7})$  & $x^{12} - 2x^{11} + 2x^{10} - x^8 + x^6 - x^4 + 2x^2 - 2x + 1$ \\ & $2x^{12} - 4x^{11} - 12x^{10} + 44x^9 - 33x^8 - 44x^7 + 96x^6 - 44x^5 - 33x^4 + 44x^3 - 12x^2 - 4x + 2$\\

$\frac{1}{90}(d_{183}+6d_{15})$  & 
$x^6 + x^4 + x^3 + x^2 + 1$; $x^6 + 12x^5 - 2x^4 - 32x^3 - 2x^2 + 12x + 1$ \\

$\frac{1}{90}(d_{195} + 20d_{15} + 104d_{3})$  & $x^{14} + 2x^{13} + 3x^{12} + 3x^{11} + 3x^{10} + x^9 - x^7 + x^5 + 3x^4 + 3x^3 + 3x^2 + 2x + 1$ \\& $2x^{14} + 4x^{13} - 6x^{12} - 42x^{11} - 69x^{10} - 22x^9 + 84x^8 + 148x^7 + 84x^6 - 22x^5 - 69x^4$\\ &$ \qquad - 42x^3 - 6x^2 + 4x + 2$\\

$\frac{1}{72}( d_{219}+36d_3)$  & 
$x^6 - x^5 + x^3 - x + 1$; $x^6 + 14x^5 - 32x^3 + 14x + 1$ \\

$\frac{1}{72}(d_{219}+12d_{11})$  & 
$x^{12} - 2x^{11} + 3x^{10} - 2x^9 + 2x^7 - 3x^6 + 2x^5 - 2x^3 + 3x^2 - 2x + 1$ \\ & $
2x^{12} - 4x^{11} - 14x^{10} + 48x^9 - 35x^8 - 48x^7 + 104x^6 - 48x^5 - 35x^4 + 48x^3 - 14x^2 - 4x + 2$\\

$\frac{1}{144}(d_{228} + 120d_{4})$ & $x^6 + 2x^4 + 2x^2 + 1; 2x^6 - 2x^4 - 12x^3 - 2x^2 + 2$\\

$\frac{1}{120}(d_{260}+112d_4)$  & 
$x^{12} + 2x^{11} + 2x^{10} + 2x^9 + 3x^8 + 2x^7 + x^6 + 2x^5 + 3x^4 + 2x^3 + 2x^2 + 2x + 1$ \\
& $2x^{12} + 4x^{11} - 16x^{10} - 60x^9 - 46x^8 + 68x^7 + 146x^6 + 68x^5 - 46x^4 - 60x^3 - 16x^2 + 4x + 2$ \\

$\frac{1}{108}(d_{264} + 6d_{24})$  & 
$x^{12} - x^{11} + 3x^{10} - 3x^9 + 4x^8 - 4x^7 + 4x^6 - 4x^5 + 4x^4 - 3x^3 + 3x^2 - x + 1$ \\
& $2x^{12} - 2x^{11} - 21x^{10} + 54x^9 - 28x^8 - 68x^7 + 134x^6 - 68x^5 - 28x^4 + 54x^3 - 21x^2 - 2x + 2$ \\

$\frac{1}{108}(d_{291}+36d_3)$  & 
$x^{10} + x^8 + x^7 + x^6 + x^5 + x^4 + x^3 + x^2 + 1$ \\
& $2x^{10} - 22x^8 - 22x^7 + 23x^6 + 56x^5 + 23x^4 - 22x^3 - 22x^2 + 2$ \\

$\frac{1}{165}(d_{303} + 66 d_{7})$  & 
$x^{12} - 3x^{11} + 5x^{10} - 4x^9 + 5x^7 - 7x^6 + 5x^5 - 4x^3 + 5x^2 - 3x + 1$ \\
& $2x^{12} - 6x^{11} - 13x^{10} + 66x^9 - 63x^8 - 64x^7 + 158x^6 - 64x^5 - 63x^4 + 66x^3 - 13x^2 - 6x + 2$ \\

$\frac{1}{210}(d_{555} + 112 d_{3})$  & 
$x^{10} + x^9 + 2x^8 + 3x^7 + 4x^6 + 3x^5 + 4x^4 + 3x^3 + 2x^2 + x + 1$ \\
& $2x^{10} + 2x^9 - 35x^8 - 54x^7 + 44x^6 + 132x^5 + 44x^4 - 54x^3 - 35x^2 + 2x + 2$ \\

$\frac{1}{432}(d_{804} + 216 d_4)$  & 
$x^{10} + 2x^9 + 4x^8 + 4x^7 + 5x^6 + 4x^5 + 5x^4 + 4x^3 + 4x^2 + 2x + 1$ \\
& $2x^{10} + 4x^9 - 44x^8 - 88x^7 + 58x^6 + 208x^5 + 58x^4 - 88x^3 - 44x^2 + 4x + 2$ \\
\bottomrule
\caption{Overview of permissible polynomials found in this work, other than those provided in Table~\ref{tab:new_strong}. The polynomial for $\frac{1}{72}(d_{219}+36d_3)$ already appeared in \cite[Example~8]{BRV2}, but the other examples seem to be new. The second column gives two polynomials $a(x)$ and $b(x)$ such that the polynomial $P(x,y) = a(x)(y^2+1)+b(x)y$ is permissible. The first column then gives the corresponding numerically inferred Mahler measure $\m(P)$.}
\label{tab:T1}
\end{longtable}
}
\clearpage

\begin{table}[ht!] 
\centering
\renewcommand{\arraystretch}{1.2}
\begin{tabular}{lcccccc} 
\toprule
$\ell$ & $2$ & $3$ & $4$ & $5$ & $6$ & $7$\\
\midrule
$\#$ $\{s=1\}$ & $10$ & $10$ & $20$ & $3$ & $12$ & $1$ \\
$\#$ $\{s>1\}$ & $3$ & $46$ & $28$ & $28$ & $62$ & $9$ \\
$B$ & $100$ & $100$ & $100$ & $30$ & $30$ & $10$ \\
\bottomrule
\end{tabular}
\caption{Output of our algorithm.}
\label{tab:output}
\end{table}

\bibliographystyle{amsplain}

\begin{thebibliography}{10}

\bibitem{BM}
M.-J.~Bertin and M.~Mehrabdollahei, \emph{An exact family of bivariate polynomials and {V}ariants of {C}hinburg's {C}onjectures}, arXiv:2407.20634, 22 pp., 2025. To appear in Annales mathématiques du Québec.

\bibitem{Boyd}
D.~W.~Boyd, \emph{Kronecker's theorem and {L}ehmer's problem for polynomials in several variables}, J. Number Theory \textbf{13} (1981), no.~1, 116--121.

\bibitem{Boyd-L}
D.~W.~Boyd, \emph{{Mahler's} measure and special values of {$L$}-functions}, Experiment. Math. \textbf{7} (1998), no.~1, 37--82.

\bibitem{BRV1}
D.~W.~Boyd and F.~Rodriguez-Villegas, \emph{Mahler's measure and the dilogarithm. {I}}, Canad. J. Math. \textbf{54} (2002), no.~3, 468--492.

\bibitem{BRV2}
D.~W.~Boyd and F.~Rodriguez-Villegas, \emph{Mahler's {M}easure and the {D}ilogarithm (II)}, arXiv:math/0308041, 37 pp., with an appendix by N.~M.~Dunfield, 2005.

\bibitem{BZ}
F.~Brunault and W.~Zudilin, \emph{Many variations of {M}ahler measures---a lasting symphony}, Australian Mathematical Society Lecture Series \textbf{28}, Cambridge University Press, Cambridge, 2020.

\bibitem{Chinburg}
T.~Chinburg, \emph{Mahler measures and derivatives of {$L$}-functions at non-positive integers}, unpublished manuscript, 9 pp., 1984.

\bibitem{Deninger}
C.~Deninger, \emph{Deligne periods of mixed motives, $K$-theory and the entropy of certain $\mathbf{Z}^n$-actions}, J. Amer. Math. Soc. \textbf{10} (1997), no.~2, 259--281.

\bibitem{GM}
A.~Guilloux and J.~March\'{e}, \emph{Volume function and {M}ahler measure of exact polynomials}, Compos. Math. \textbf{157} (2021), no.~4, 809--834.

\bibitem{LQ}
H.~Liu and H.~Qin, \emph{Mahler measure of families of polynomials defining genus 2 and 3 curves}, Exp. Math. \textbf{32} (2023), no.~2, 321--336.

\bibitem{LQ-code}
H.~Liu, and H.~Qin, \emph{Code for Mahler measure of polynomials defining genus 2 and 3 curves}, \url{https://github.com/liuhangsnnu/mahler-measure-of-genus-2-and-3-curves/}, accessed March 19th, 2026.

\bibitem{Ma}
V.~Maillot,
\emph{G\'eom\'etrie d'Arakelov des vari\'et\'es toriques et fibr\'es en droites int\'egrables},
M\'em. Soc. Math. Fr. (N.S.) \textbf{80} (2000).

\bibitem{Ray}
G.~A.~Ray, \emph{Relations between {M}ahler's measure and values of {$L$}-series}, Canad. J. Math. \textbf{39} (1987), no.~3, 694--732.


\bibitem{Smyth}
C.~J.~Smyth, \emph{A {K}ronecker-type theorem for complex polynomials in several variables}, Canad. Math. Bull. \textbf{24} (1981), no.~4, 447--452.

\bibitem{PARI}
The PARI~Group (Univ. Bordeaux), \emph{{PARI/GP version \texttt{2.17.2}}}, 2025. Available at \url{http://pari.math.u-bordeaux.fr/}.

\bibitem{Vandervelde}
S.~Vandervelde, \emph{The {M}ahler measure of parametrizable polynomials}, J. Number Theory \textbf{128} (2008), no.~8, 2231--2250.

\bibitem{Zudilin}
W.~Zudilin, \emph{Apéry limits and {M}ahler measures}, arXiv:2109.12972, 6 pp., 2021.

\end{thebibliography}

\end{document}